\documentclass[a4paper,12pt]{amsart}
\usepackage{amsmath}
\usepackage{amssymb}
\usepackage{mathrsfs}
\usepackage{enumerate}
\usepackage{ifthen}
\usepackage{graphicx}
\usepackage[T1]{fontenc} %skandit

\nonstopmode \numberwithin{equation}{section}
\newtheorem{thm}{Theorem}[section]
\newtheorem{cor}{Corollary}[section]

\theoremstyle{definition}

\newcounter{minutes}
\divide\time by 60
\newcounter{hours}
\multiply\time by 60
\addtocounter{minutes}{-\time}
\newcounter {own}
\def\theown {\thesection       .\arabic{own}}

{\qed\bigskip}

\newcounter{alphabet}

\begin{document}
	
	\title{On the pre-Schwarzian norm estimate of special close-to-convex harmonic mappings}

	\author{Sushil Pandit}
	\address{Sushil Pandit,
		Department of Mathematics,
		Government Engineering College Palamu,
		Lesliganj-822118, India, Orcid Id-0000-0002-4129-6456}
	\email{sushilpandit15594@gmail.com}

	\subjclass[2010]{Primary 30C55, 30C45}
	\keywords{Harmonic mappings; Harmonic close-to-convex functions; Harmonic Bloch mappings; convex functions; pre-Schwarzian norm; coefficient bound; Growth theorem; Distortion theorem.}
	
	\def\thefootnote{}
	\footnotetext{ {\tiny File:~\jobname.tex,
			printed: \number\year-\number\month-\number\day,
			\thehours.\ifnum\theminutes<10{0}\fi\theminutes }
	} \makeatletter\def\thefootnote{\@arabic\c@footnote}\makeatother

	\begin{abstract}
		In this article, we consider a class of close-to-convex harmonic mappings with special analytic part in the unit disk $\mathbb{D}=\{z\in\mathbb{C}:|z|<1\}$ and obtain sharp pre-Schwarzian norm estimate. We study the theory of harmonic Bloch mapping for the considered class. Moreover, we discuss some growth and distortion theorems for analytic and co-analytic parts of such harmonic mappings. 
	\end{abstract}

	\thanks{}
	
	\maketitle
	\pagestyle{myheadings}
	\markboth{Sushil Pandit}{On the pre-Schwarzian norm estimate of special close-to-convex harmonic mappings}
	%	\mark{Certain properties of analytic Bloch functions}
	
	%%%%%%%%%%%%%%%%%%%%%%%%%%%%%%%%%%%%%%%%%%%%%%%%%%%%%%%%%%%%%%%%%%%%%%%%%%%%%%%%%%%%%%%%%%%%%%%%%%%%%%%%%%%%%%%%%%%%%%%%%%%%%%%%%%%%%%%%%%%%%%
	\section{Introduction}
The pre-Schwarzian derivative of locally univalent analytic mappings has become a well known technique to find either necessary or sufficient conditions for the global univalence, or to obtain certain geometric conditions on the range of the related functions. Due to such useful properties of the derivative, the technique has been extended to harmonic mappings (see \cite{Kanas-Smet-2014, Hernandez-Martin-2015}). The important question is to put suitable and global definition of the derivative and the same is well settled in \cite{ Hernandez-Martin-2015}. In this article, we consider a class of close-to-convex harmonic mappings with special analytic part and study the pre-Schwarzian norm estimate. Moreover, we discuss some growth and distortion theorems for such mappings.\\

	Let $\mathcal{A'}$ denote the class of all analytic functions $h$ in the unit disk $\mathbb{D}=\{z\in\mathbb{C}:|z|<1\}.$
	Let $\mathcal{A}$ be the class of functions $h\in\mathcal{A'}$ of the form
	\begin{align}\label{P800}
		h(z)=z+\sum\limits_{n=2}^\infty a_n z^n
	\end{align}
	
	with the normalization $h(0)=h'(0)-1=0.$ Let $\mathcal{S}$ be the class of univalent functions $h\in\mathcal{A}$.
		\subsection{Analytic functions convex in one direction:}
	 A function $h\in\mathcal{S}$ is called convex in $\mathbb{D}$ if the image domain $h(\mathbb{D})$ is convex. A domain $E$ is called convex if line segment joining any two points in $E$ lies in $E.$ Let $\mathcal{C}$ denotes the class of all convex functions $h\in\mathcal{S}$. It is well-known that (see \cite{Duren-1983}) a function $h\in\mathcal{C}$ is characterized by
	\begin{align*}
		{\rm Re\,}\left(1+z\frac{h''(z)}{h'(z)}\right)>0
	\end{align*}
	for $z\in\mathbb{D}.$ From the above characterization, we see that the function $h(z)=z/(1-z)$ is univalent and convex in $\mathbb{D}$. In fact, the function $h$ maps $\mathbb{D}$ univalently onto the convex domain ${\rm Re\,}w >-1/2$. Umezawa \cite{Umezawa-1952} studied that, a locally univalent function
	
		\begin{align*}
	h\in\mathcal{F}=\left\{\phi\in\mathcal{A}:\alpha>{\rm Re\,}\left(1+z\frac{h''(z)}{h'(z)}\right)>-\frac{\alpha}{2\alpha-3},~\alpha\geq 3/2\right\}	
	\end{align*}
	 is univalent in $\mathbb{D}$ and maps $\mathbb{D}$  onto a domain which is convex in one direction. A	domain $E$ is called convex in the direction $\alpha\in[0,\pi)$ if every line parallel to the line through $0$ and $e^{i\alpha}$ has a connected or empty intersection with $E$. A function $h\in\mathcal{A}$ is called convex in the direction $\alpha$, if it maps $\mathbb{D}$
	to the domain convex in direction $\alpha.$
%	subclasses of S
%	F¼ f 2S : R 1 þ zf 00ðzÞ
%	f 0
%	ðzÞ
%	[  1
%	2
%	; z 2 D
%	; ð3Þ
%	G¼ f 2S : R 1 þ zf 00ðzÞ
%	f 0
%	ðzÞ
%	\
%	3
%	2
%	; z 2 D
%	; ð4Þ
%	respectively. Observe that, the inequality appeared in (3) is
%	a consequence of Kaplan characterization [2, p. 48, Theorem 2.18], hence functions in F are also close-to-convex
%	in D. 
	\subsection{Harmonic mappings:}	
 A continuous twice differentiable complex valued function $f=u+iv$ is called harmonic in a domain $E$ if both $u$ and $v$ are real harmonic in $E.$ In any simply connected domain $E,$ every harmonic mapping $f$ can be written as $f=h+\overline{g},$ where $h$ and $g$ are analytic in $E.$ This representation is known as canonical representation and $h$ is called analytic part whereas $g$ is called co-analytic part of $f.$ A harmonic mapping $f=h+\overline{g}$ is sense preserving if the Jacobian $J_f=|h'|^2-|g'|^2$ is positive and sense reversing if $J_f$ is negative. Lewy \cite{Lewy-1936} showed that $f$ is locally univalent if $J_f$ is non vanishing. Let $\mathcal{H}$ be class of all sense preserving harmonic functions $f=h+\overline{g}$ in $\mathbb{D}$ of the form
    \begin{align}\label{P-151}
		h(z)=z+\sum\limits_{n=2}^\infty a_nz^n,~g(z)=\sum\limits_{n=1}^\infty b_nz^n.
	\end{align}
	Let $\mathcal{S_H}$ denote the family of functions in $\mathcal{H}$ that are univalent in $\mathbb{D}$. In 1984, Clunie and Sheil-Small \cite{Clunie-Small-1984} studied several geometric properties of functions in $\mathcal{S_H}$ alongwith functions in its  subclasses of starlike functions, convex functions, close-to-convex functions.
	
	\subsection{Bloch Functions:}
	 
	The study of behavior of the quantity $|h'(z)|(1-|z|)$ as $|z|\rightarrow 1$ from the interior of the unit circle (see \cite{Seidel-Walsh-1942}) is one of the main reason of discovery of the concept of Bloch functions. An analytic function $h$  defined in $\mathbb{D}$ is called Bloch if $\beta_h=\sup_{z\in\mathbb{D}}(1-|z|^2)|h'(z)|<\infty.$	The collection $\mathcal{B}$ of analytic Bloch functions in $\mathbb{D}$ form a Banach space with the norm given by
	\begin{align*}
		||h||_{\mathcal{B}}=|h(0)|+\sup_{z\in\mathbb{D}}(1-|z|^2)|h'(z)|.
	\end{align*} 
	For more properties of analytic Bloch functions, we refer to the articles \cite{Pommerenke-1970, Anderson-Clunie-Pommerenke-1974,  Anderson-Shields-1976, Bonk-1991, Bonk-Minda-Yanagihara-1996}. A harmonic mapping $f=h+\overline{g}\in\mathcal{H}$ is called harmonic Bloch if and only if
	\begin{align*}
		\beta_f=\sup_{z\in\mathbb{D}}(1-|z|^2)(|h'(z)|+|g'(z)|)<\infty.
	\end{align*}
	For more information about harmonic Bloch mappings, we refer to \cite{Ali-Pandit-2023, Chen-Gauthier-Hengartner-2000, Colonna-1989}.

	\subsection{Pre-Schwarzian norms:}
	For a locally univalent analytic function $h$, the quantity 
	\begin{align}\label{P-141}
	||P_h|| = \sup_{z \in \mathbb{D}}(1-|z|^2)|P_h(z)|
	\end{align}
	is known as pre-Schwarzian norm of $h.$ Here, $P_h=\frac{h''}{h'}$ is called pre-Schwarzian derivative of $h.$ For a univalent function $h\in\mathcal{A}$, we have the sharp estimate $||P_h||\leq 6$ (see \cite{Kraus-1932}). On the other hand, for a locally univalent function $h\in\mathcal{A}$, if $||P_h||\leq 1$ (see \cite{Becker-1972}, \cite{Becker-Pommerenke-1984}), then the function $h$ is univalent.
%	 In 1976, Yamashita \cite{Yamashita-1976} proved that $||P_h||$ is finite if and only if $h$ is uniformly locally univalent in $\mathbb{D}.$ In recent years, Firoz Ali and Sanjit Pal \cite{Ali-Pal-2023, Ali-Pal--2023, Ali-Pal---2023, Ali-Pal-2024} have studied pre-Schwarzian norm for different classes of analytic functions. 
	Several important global univalence criteria for a locally univalent analytic function have been obtained using pre-Schwarzian norm. This makes a valid question if the pre-Schwarzian norm can be generalized to harmonic mappings. Positively, Kanas and Klimek-Sm\c{e}t \cite{Kanas-Smet-2014} introduced a definition of pre-Schwarzian derivative (which we denote by $\mathbf{P}_f$) for a locally univalent and sense preserving harmonic mapping $f = h+\overline{g}$ having dilatation $\omega = g'/h'= p^2$ for some analytic function $p$. The definition is given as follows:
	\begin{align}\label{p1-020}
		\mathbf{P}_f= \frac{2\partial(\log\lambda)}{\partial z}= \frac{h''}{h'}+\frac{2\overline{p}p'}{1+|p|^2},~~\text{where}~\lambda = |h'|+|g'|.
	\end{align}
	The pre-Schwarzian norm is defined by \eqref{P-141}.  In \cite{Ali-Pandit-2023}, Ali and Pandit have corrected some earlier flawed estimates of $\mathbf{P}_f$ for different classes of close-to-convex harmonic mappings having different analytic parts. 
	
	The pre-Schwarzian derivative given in  \eqref{p1-020} is valid for restricted  dilatation. In 2015, Hern{\'a}ndez and Mart{\'\i}n \cite{Hernandez-Martin-2015} defined the pre-Schwarzian derivative of a locally univalent harmonic mapping $f=h+\overline{g}$ by\\
	\begin{align}\label{p1-027}
		P_f = \left(\log J_f\right)_z = \frac{h''}{h'}-\frac{\overline{\omega}\omega'}{1-|\omega|^2},
	\end{align}
	where $J_f$ is the Jacobian and $\omega=g'/h'$ is the dilatation of $f$. Again, the pre-Schwarzian norm is defined same as \eqref{P-141}. In 2015, Hern{\'a}ndez and Mart{\'\i}n \cite{Hernandez-Martin-2015} proved the sharp estimate $\|P_f\|\le 5$ for convex harmonic mappings $f$. In 2016, Graf \cite{GRAF-2016} obtained sharp estimate of $\|P_f\|$ for a affine and linear invariant family of locally univalent harmonic mappings. In 2019, Liu and Ponnusamy \cite{Liu-Ponnusamy-2018} obtained the sharp estimates of the pre-Schwarzian norm $\|P_f\|$ for stable harmonic univalent functions and stable harmonic convex functions. In the same article \cite{Liu-Ponnusamy-2018}, authors have connected the pre-Schwarzian norm of harmonic mappings with pre-Schwarzian norm of its analytic part. In this direction, Ali and Pandit \cite{Ali-Pandit-2023} considered a class of close-to-convex harmonic mappings and obtained estimate of $\|P_f\|$ by introducing a different method of constructing extremal function. In this article, we obtain sharp estimate of pre-Schwarzian norm of certain close-to-convex harmonic mappings.\\

	\section{Main Results with Background}\label{Sect-2}
	Geometric properties of harmonic mappings are closely related with that of analytic functions. In particular, analytic parts of harmonic mappings play an important role to shape their geometric properties. It is well known that a sense preserving harmonic mapping $f=h+\overline{g}\in\mathcal{H}$ is univalent and close-to-convex if $h$ is univalent and convex (see \cite{Clunie-Small-1984}). In 2011, Bshouty and Lyzzaik \cite{Bshouty-Lyzzaik-2011} proved that a harmonic mapping $f=h+\overline{g}\in\mathcal{H}$ with dilatation $\omega=z$ is univalent and close-to-convex if 
	\begin{align*}
	h\in\mathcal{G}=\left\{\phi\in\mathcal{A}:{\rm Re\,}\left(1+\frac{z\phi''(z)}{\phi'(z)}\right)>-\frac{1}{2}\right\}
	\end{align*}
	   for $z\in\mathbb{D}$. We note that the class $\mathcal{G}$ can be obtained from the class $\mathcal{F}$ by taking $\alpha\rightarrow\infty.$ In this article, we consider a class $\mathcal{H}_c$, $-1/2\le c\le 0,$ of sense preserving harmonic mappings $f=h+\overline{g}\in\mathcal{H}$ in the unit disk $\mathbb{D}$ having dilatation $\omega=g'/h'$ and 
	   
	   \begin{align*}
	   h\in\mathcal{A}_c=	\left\{\phi\in\mathcal{A}:{\rm Re\,}\left(1+\frac{z\phi''(z)}{\phi'(z)}\right)>c\right\}.
	   \end{align*}  
In 2013, Bshouty et al. \cite{Bshouty-Joshi-Joshi-2013}   
prove that the sense preserving harmonic mapping $f=h+\overline{g}\in\mathcal{H}_c$ is univalent and close-to-convex if the dilatation $\omega(z)$
satisfies the condition $|\omega(z)|<\cos{(\pi c)}$ for $z\in\mathbb{D}$. In 2015, Ponnusamy and Kaliraj \cite{Ponnusamy-Kaliraj-2015} proved that  $f=h+\overline{g}\in\mathcal{H}_c$ is univalent and close-to-convex if the dilatation $\omega(z)$
satisfies the condition
\begin{align*}
{\rm Re\,}\left(1+\frac{\lambda z\omega'(z)}{1-\lambda\omega(z)}\right)>-\frac{1}{2},
\end{align*}
for all $\lambda$ such that $|\lambda|=1,~z\in\mathbb{D}$.

Now, we discuss the pre-Schwarzian norm estimate for harmonic mappings in the class $\mathcal{H}_c.$ 

	\begin{thm}\label{p807}
		Let $f=h+\overline{g}\in\mathcal{H}_c, -1/2\le c\le 0$ be a harmonic mapping of the form \eqref{P-151}. Then pre-Schwarzian norm $\|P_f\|\le 4(1-c)+1$. The estimate is sharp. 
	\end{thm}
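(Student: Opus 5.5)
We split $P_f$ as in \eqref{p1-027} into its analytic part and its dilatation part, and bound each term separately:
\begin{align*}
(1-|z|^2)|P_f(z)|\le (1-|z|^2)\left|\frac{h''(z)}{h'(z)}\right|+\frac{(1-|z|^2)|\omega(z)||\omega'(z)|}{1-|\omega(z)|^2}.
\end{align*}
Since $f$ is sense preserving, $\omega=g'/h'$ is analytic with $|\omega|<1$ in $\mathbb{D}$. The Schwarz--Pick lemma gives $|\omega'(z)|(1-|z|^2)\le 1-|\omega(z)|^2$. Hence the second term is at most $|\omega(z)|<1$, so its supremum is at most $1$.

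For the analytic part we use the subordination characterization of $\mathcal{A}_c$. The function
\begin{align*}
p(z)=\frac{1}{1-c}\left(1+\frac{zh''(z)}{h'(z)}-c\right)
\end{align*}
has positive real part and satisfies $p(0)=1$. So $p(z)=(1+\phi(z))/(1-\phi(z))$ for a Schwarz function $\phi$. This gives
\begin{align*}
\frac{zh''(z)}{h'(z)}=(1-c)\bigl(p(z)-1\bigr)=\frac{2(1-c)\phi(z)}{1-\phi(z)}.
\end{align*}
Write $\phi(z)=z\psi(z)$ with $|\psi|\le1$, which is allowed by Schwarz's lemma. Then
\begin{align*}
\left|\frac{h''(z)}{h'(z)}\right|=\frac{2(1-c)|\psi(z)|}{|1-z\psi(z)|}\le \frac{2(1-c)}{1-|z|}.
\end{align*}
Multiplying by $1-|z|^2$ yields $(1-|z|^2)|h''/h'|\le 2(1-c)(1+|z|)<4(1-c)$. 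Adding the two bounds gives $\|P_f\|\le 4(1-c)+1$.

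For sharpness we take the extremal analytic part $h'(z)=(1-z)^{-2(1-c)}$, for which $h''/h'=2(1-c)/(1-z)$. Along the real axis $z=r\to1^-$ this makes the first term $2(1-c)(1+r)\to 4(1-c)$, with the correct sign, namely positive real.

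For the dilatation we need $\omega$ with $|\omega(r)|\to1$ and with equality in Schwarz--Pick along the same ray, i.e.\ a disk automorphism. The obstacle is that $\omega(0)=g'(0)$ is not forced to vanish, so we are free to choose $\omega(z)=(z+a)/(1+az)$ with $a\in(0,1)$, or even a constant multiple of an automorphism. We also need the phase of $-\overline{\omega}\omega'/(1-|\omega|^2)$ at $z=r$ to align with $h''/h'$, which is positive real. With real $\omega$ we have $\omega'>0$, so this term is negative and does not align.

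To fix the sign we instead choose $\omega(z)=-(z+a)/(1+az)$, or more simply $\omega(z)=-z$. Then $\omega(r)=-r$ and $\omega'=-1$, so $-\overline{\omega}\omega'=-r<0$, which is still negative. We therefore flip the direction instead: evaluate along $z=-r$ with $h'(z)=(1+z)^{-2(1-c)}$, or equivalently take $\omega(z)=\bar{\omega}$-type rotations $\omega(z)=e^{i\theta}z$.

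Concretely, the contribution at $z$ is $-\overline{\omega}\omega'/(1-|\omega|^2)=-\overline{z}/(1-|z|^2)$ for $\omega=e^{i\theta}z$, independent of $\theta$. At $z=-r$ this equals $r/(1-r^2)>0$. At $z=-r$, the analytic part $h'=(1+z)^{-2(1-c)}$ gives $h''/h'=-2(1-c)/(1+z)=-2(1-c)/(1-r)$, which is negative. To make both terms positive we pair $h'(z)=(1-z)^{-2(1-c)}$ with evaluation at $z=r$ and need $-\overline{\omega(r)}\omega'(r)>0$. The choice $\omega(z)=-z$ gives $-(-r)(-1)=-r$, and every choice of the form $\omega(z)=\lambda z$ gives $-|\lambda|^2 r$. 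Among automorphisms we try $\omega(z)=(a-z)/(1-az)$ with $a$ close to $1$, for which $\omega(r)$ is real and $\omega'(r)<0$. This gives a positive contribution of size $\approx\frac{|\omega(r)|}{1-r^2}$, and near $r$ slightly below $a$ we have $|\omega|$ close to $1$ on a suitable sequence.

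Choosing $a=a_n\to1$ and points $r_n\to1$ with $r_n/a_n$ tuned, both terms approach their maxima simultaneously. Because sharpness concerns a supremum, we may also let the function vary, so the family $f_n$ shows $4(1-c)+1$ is best possible. This balancing of $\omega$ against $h$ along the same radius is the main obstacle. A single fixed automorphism $\omega(z)=(a-z)/(1-az)$ near $z=a$ makes $\omega\approx0$, not modulus $1$, so we expect to need a limiting family (or to use $\sup$ over a one-parameter family of $f$), and we would verify the limit explicitly.
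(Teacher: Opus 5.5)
Your upper bound is correct and coincides with the paper's: the triangle inequality, Schwarz--Pick for the dilatation term, and $|h''/h'|\le 2(1-c)/(1-|z|)$ from $zh''/h'\prec 2(1-c)z/(1-z)$. The gap is in the sharpness half. You leave it as a plan, and the plan rests on a wrong heuristic. You do reach the right family: $\omega(z)=(a-z)/(1-az)$ is $-\omega_d$ for the paper's $\omega_d(z)=(z-d)/(1-dz)$. Since $\overline{\omega}\omega'/(1-|\omega|^2)$ is unchanged under $\omega\mapsto-\omega$, it gives exactly the paper's $P_{f_d}$. However, your claim that $|\omega(r)|$ is close to $1$ for $r$ slightly below $a$ is false, since $\omega(a)=0$, as you notice yourself at the end. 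You also never specify or check the ``tuning'' of $r_n$ against $a_n$ that is supposed to make both terms extremal at once. So no lower bound for the norm over the class is actually established.

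The missing step is a short explicit computation. Because $\omega$ is an automorphism, Schwarz--Pick holds with equality, and on the real axis
\[
(1-r^2)P_f(r)=2(1-c)(1+r)+\frac{a-r}{1-ar},\qquad 1-\frac{a-r}{1-ar}=\frac{(1-a)(1+r)}{1-ar}\le\frac{2(1-a)}{1-r}.
\]
So $r$ must be hyperbolically far from $a$ on the side of the origin, that is, $(1-a_n)/(1-r_n)\to0$, rather than close to $a$. For instance, with $r_n=1-1/n$ and $a_n=1-1/n^2$ you get $(1-r_n^2)|P_{f_n}(r_n)|\to 4(1-c)+1$.

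The paper instead fixes $d$, maximizes $M(r)=2(1-c)(1+r)-\frac{r-d}{1-dr}$ at its critical point $r_0$, and lets $d\to1^-$ in $M(r_0)$. Both routes give sharpness only in the sense of the best constant. Your suspicion that a limiting family is unavoidable is right. For fixed $d$, the $h$-term approaches $4(1-c)$ only along sequences $z\to1$ approaching radially, and there the dilatation term $-(\overline{z}-d)/(1-dz)$ tends to $-1$. Hence $\|P_{f_d}\|<4(1-c)+1$, and the paper's concluding equality $\|P_{f_d}\|=4(1-c)+1$ should be read as $\sup_{d}\|P_{f_d}\|=4(1-c)+1$.
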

	\begin{proof}
		As $f=h+\overline{g}\in\mathcal{H}_c$, from the definition, it follows that
		$$1+\frac{zh''(z)}{h'(z)}\prec\frac{1+(1-2c)z}{1-z},$$ which implies that,
		$$\frac{zh''(z)}{h'(z)}\prec\frac{2(1-c)z}{1-z}.$$ Thus, we get
		\begin{align}\label{p805}
			\left|\frac{zh''(z)}{h'(z)}\right|\le \frac{2|z|(1-c)}{1-|z|}.
		\end{align}
		This estimate is sharp for the analytic function $h(z)=\frac{(1-z)^{2c-1}-1}{1-2c}.$\\
		
		The pre-Schwarzian derivative $P_f$ of a harmonic mapping $f=h+\overline{g}\in\mathcal{H}_c$ is 
		$$
			P_f=\frac{h''}{h'}-\frac{\overline{\omega}\omega'}{1-|\omega|^2}.
		$$
		Thus from Schwarz Pick Lemma and the relation \eqref{p805}, the pre-Schwarzian norm $\|P_f\|$ is 
		\begin{align}\label{HM-100}
			\|P_f\|= & \sup_{z \in \mathbb{D}}\left|\frac{h''}{h'}-\frac{\overline{\omega}\omega'}{1-|\omega|^2}\right|(1-|z|^2)\\\nonumber
			\le & \sup_{z \in \mathbb{D}}\left(\left|\frac{h''}{h'}\right|+\left|\frac{\overline{\omega}\omega'}{1-|\omega|^2}\right|\right)(1-|z|^2)\\\nonumber
			\le & \sup_{z \in \mathbb{D}}\left(\left|\frac{2(1-c)}{1-|z|}\right|+\frac{1}{1-|z|^2}\right)(1-|z|^2)\\\nonumber
			=& \sup_{z \in \mathbb{D}}\left(2(1+|z|)(1-c)+1\right)\\\nonumber
			=& 4(1-c)+1.
		\end{align}
		
		To show that the estimate is sharp, we consider a harmonic mapping $f_d=h+\overline{g}$ with $h(z)=\frac{(1-z)^{2c-1}-1}{1-2c}$ and dilatation $\omega_d(z)=\frac{z-d}{1-dz},~d\in(0,1).$ It is clear that $f_d\in \mathcal{H}_c$ and so $\|P_{f_d}\|\le 4(1-c)+1.$ A few calculations show that 
		\begin{align}
			P_h(z)=\frac{2(1-c)}{1-z}\quad\text{and}\quad \frac{\overline{\omega_d(z)}\omega_d'(z)}{1-|\omega_d(z)|^2}=\frac{\overline{z}-d}{(1-dz)(1-|z|^2)}
		\end{align}
		from which it follows that 
		\begin{align}
			\|P_{f_d}\|=&\sup_{z \in \mathbb{D}}\left|\frac{h''(z)}{h'(z)}-\frac{\overline{\omega_d(z)}\omega_d'(z)}{1-|\omega_d(z)|^2}\right|(1-|z|^2)\\\nonumber
			=&\sup_{z \in \mathbb{D}}\left|\frac{2(1-c)}{1-z}-\frac{\overline{z}-d}{(1-dz)(1-|z|^2)}\right|(1-|z|^2)\\\nonumber
			\ge & L_d
		\end{align}
		where 
		\begin{align}
			L_d=&\sup_{z \in [0,1)}\left|\frac{2(1-c)}{1-z}-\frac{\overline{z}-d}{(1-dz)(1-|z|^2)}\right|(1-|z|^2)\\\nonumber\\\nonumber
			=&\sup_{r \in [0,1)}\left|2(1-c)(1+r)-\frac{r-d}{1-dr}\right|\\\nonumber
			=&\sup_{r \in [0,1)}M(r)\\\nonumber
		\end{align}
		with $M(r)=2(1-c)(1+r)-\frac{r-d}{1-dr}.$ We note that $M(0)=2(1-c)+d,~\phi(1^-)=4(1-c)-1.$ Next $M'(r)=0$ implies that $$r=\frac{1}{d}\pm\frac{1}{d}\sqrt{\frac{1-d^2}{2(1-c)}}.$$ We consider $\frac{1}{d}-\frac{1}{d}\sqrt{\frac{1-d^2}{2(1-c)}}=r_0$ as it lies in $(0,1).$ Since $r_0$ is stationary point and $$M(r_0)=2(1-c)+\frac{3-2c}{d}-\frac{4(1-c)}{d}\sqrt{\frac{1-d^2}{2(1-c)}}\rightarrow 4(1-c)+1$$ as $d$ tends to $1,$ it follows that $L_d=4(1-c)+1.$ Thus we get the relation $$4(1-c)+1=L_d\le \|P_{f_d}\|\le 4(1-c)+1$$ which implies that $\|P_{f_d}\|=4(1-c)+1.$
	\end{proof}
	It is important to note that the Theorem \ref{p807} provide pre-Schwarzian norm estimate of various classes of harmonic mappings depending on $c\in[-1/2,0].$ In this connection, we present the following result. 
\begin{cor}
	Let $f=h+\overline{g}\in\mathcal{H}$ be a harmonic mapping of the form \eqref{P-151}. If $h$ is convex then pre-Schwarzian norm $\|P_f\|\le 5$. The estimate is sharp. 
\end{cor}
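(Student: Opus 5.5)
The corollary is the special case $c=0$ of Theorem \ref{p807}, so the plan is to reduce to that theorem.

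\textbf{Reduction.} If $h\in\mathcal{A}$ is convex, then by the characterization recalled in the introduction
\begin{align*}
{\rm Re\,}\left(1+\frac{zh''(z)}{h'(z)}\right)>0, \qquad z\in\mathbb{D}.
\end{align*}
Hence $h\in\mathcal{A}_0$. Since $f\in\mathcal{H}$ is sense preserving, $f\in\mathcal{H}_0$. Taking $c=0$ in Theorem \ref{p807} gives $\|P_f\|\le 4(1-0)+1=5$.

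The upper bound can also be checked directly. The subordination $zh''/h'\prec 2z/(1-z)$ gives $(1-|z|^2)|h''/h'|\le 2(1+|z|)<4$. Since $f$ is sense preserving, $|\omega|<1$, and the Schwarz--Pick lemma gives $(1-|z|^2)|\omega'|/(1-|\omega|^2)\le 1$. Together with $|\overline{\omega}|\le 1$, this yields
\begin{align*}
(1-|z|^2)|P_f(z)|<4+1=5.
\end{align*}

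\textbf{Sharpness.} Take $h(z)=z/(1-z)$, the case $c=0$ of the extremal function in Theorem \ref{p807}. Then $h''/h'=2/(1-z)$. Take the dilatation $\omega_d(z)=(z-d)/(1-dz)$ with $d\in(0,1)$, and set $g(z)=\int_0^z\omega_d h'$, so that $f_d=h+\overline{g}\in\mathcal{H}$.

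I would not use the paper's evaluation along the real segment. For $c=0$ that gives
\begin{align*}
M(r)=2(1+r)-\frac{r-d}{1-dr},
\end{align*}
and this is bounded by $2(1+r)+1<5$ without the needed alignment. The reason is that at real $r$ the term $-(r-d)/(1-dr)$ becomes negative as $r\to1$. So I expect the main obstacle to be choosing an approach path along which both terms align with modulus near their maxima. This requires $h''/h'$ large, so $z$ near $1$, together with $-\overline{\omega_d}\,\omega_d'/(1-|\omega_d|^2)$ having the same argument.

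The first thing I would try is to change the dilatation rather than the path. I would take $\omega_d(z)=(z+d)/(1+dz)$, a disk automorphism that fixes $1$ and satisfies $\omega_d(r)\to 1$ as $r\to 1^-$. Along $z=r\in(0,1)$ we get
\begin{align*}
-\frac{\overline{\omega_d}\,\omega_d'}{1-|\omega_d|^2}(1-r^2)=-\frac{r+d}{1+dr}\to -1.
\end{align*}
This is negative again, so I would flip a sign. A rotation $h\mapsto \overline{\lambda}h(\lambda z)$ keeps $h$ convex and moves the pole of $h''/h'$ to $\overline{\lambda}$. Choosing $\omega(z)=-(z-d)/(1-dz)$, for which $-\overline{\omega}\omega'$ is unchanged, does not help, so I would instead pair the pole at $z=-1$, using $h(z)=z/(1+z)$ with $h''/h'=-2/(1+z)$, with $\omega_d(z)=(z-d)/(1-dz)$ along $z=-r$. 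There
\begin{align*}
(1-r^2)\frac{h''}{h'}=-2(1-r)\cdot\frac{1+r}{1-r}\cdot\frac{1}{1+r}\cdot(1+r)\to -4,
\end{align*}
while
\begin{align*}
-\frac{\overline{\omega_d}\,\omega_d'}{1-|\omega_d|^2}(1-r^2)=\frac{r+d}{1+dr}\to +1.
\end{align*}
These two terms have opposite signs, so this combination also fails. The remaining step is therefore a careful search over the relative position of the pole of $h''/h'$ and the point $\omega_d^{-1}(0)$, to find a configuration where the limit is $4+1=5$. The natural candidate is to put the pole at $1$ and the zero of $\omega$ at $-d$, that is, to use $\omega_d(z)=(z+d)/(1+dz)$ along $z\to1^-$. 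I would compute that limit precisely and, if necessary, let $d\to1$ along a tangential path.
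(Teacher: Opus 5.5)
Your upper bound is correct and is the paper's argument: it is the case $c=0$ of Theorem \ref{p807}, with $(1-|z|^2)|h''/h'|\le 2(1+|z|)$ and Schwarz--Pick for the dilatation term.

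\textbf{The gap is sharpness.} You do not prove it, and your reason for discarding the paper's computation along the real segment is wrong. It is true that $M(r)=2(1+r)-\frac{r-d}{1-dr}<5$ for every fixed pair $(r,d)$. It is also true that, for fixed $d$, the second term tends to $-1$ as $r\to1^-$. But sharpness concerns the supremum over the whole family $\{f_d\}$. For $0<r<d$ the second term equals $\frac{d-r}{1-dr}>0$, so both terms \emph{are} aligned on that segment. Let $d\to1$ and $r\to1$ together with $(1-d)/(1-r)\to0$, for instance $r=1-\sqrt{1-d}$. Then
\begin{align*}
M(r)=2\bigl(2-\sqrt{1-d}\bigr)+\frac{1-\sqrt{1-d}}{d+\sqrt{1-d}}\longrightarrow 4+1=5 .
\end{align*}
This is what the paper obtains by evaluating $M$ at its critical point $r_0$ and letting $d\to1$. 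Your observation does expose an overstatement in the paper: it claims $\|P_{f_d}\|=5$ for each fixed $d$. In fact each single $f_d$ has $\|P_{f_d}\|<5$, and the value $5$ is only approached as $d\to1$. That is all sharpness requires.

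\textbf{Your final candidate cannot work.} For an automorphism $\omega(z)=e^{i\phi}(z-a)/(1-\overline{a}z)$ one computes
\begin{align*}
\frac{(1-|z|^2)\,\overline{\omega}\,\omega'}{1-|\omega|^2}=\frac{\overline{z}-\overline{a}}{1-\overline{a}z},
\end{align*}
which tends to $\overline{\zeta}$ as $z\to\zeta\in\partial\mathbb{D}$ along any path. For convex $h$, the quantity $(1-|z|^2)h''/h'$ lies in the disc $|w-2\overline{z}|\le 2$. Hence for any such fixed $f$, $\limsup_{|z|\to1}(1-|z|^2)|P_f(z)|\le 3$, whether the approach is radial or tangential.

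Concretely, $\omega_d(z)=(z+d)/(1+dz)$ along $z=r$ gives $2(1+r)-\frac{r+d}{1+dr}$. This is increasing in $r$ and bounded by $3$, which your own computation already showed has the wrong sign. Letting $d\to1$ does not rescue it, because $\bigl|(\overline{z}+d)/(1+dz)-\overline{z}\bigr|\le 1-|z|^2$ whenever ${\rm Re\,}z>0$, uniformly in $d$.

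\textbf{The missing idea.} The near-extremal configuration is interior, not a boundary limit of a fixed mapping. The zero $d$ of $\omega_d$ must lie beyond $z$ on the ray toward the pole $1$ of $h''/h'$, and must approach the boundary faster than $z$ does. Then $|\omega_d(z)|\to1$ while $-\overline{\omega_d}\,\omega_d'$ points in the same direction as $h''/h'$.
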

	\begin{proof}
	From the hypothesis, it follows that $c=0.$ Thus the result directly comes from Theorem \ref{p807} upon putting $c=0$.
	\end{proof}

\begin{thm}
	A harmonic mapping $f=h+\overline{g}\in\mathcal{H}_c$ may not be harmonic Bloch.
\end{thm}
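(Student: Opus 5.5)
We give an explicit member of $\mathcal{H}_c$ with $\beta_f=\infty$. Since $\beta_f\ge\sup_{z\in\mathbb{D}}(1-|z|^2)|h'(z)|$, it is enough to pick $h\in\mathcal{A}_c$ that is not an analytic Bloch function; any admissible dilatation, e.g.\ $\omega\equiv0$ or $\omega(z)=z/2$, then completes $f$. The extremal function from the proof of Theorem \ref{p807} is the natural choice:
\begin{align*}
h(z)=\frac{(1-z)^{2c-1}-1}{1-2c},\qquad h'(z)=(1-z)^{2c-2}.
\end{align*}

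\textbf{Membership.} The proof of Theorem \ref{p807} already gives $zh''/h'=2(1-c)z/(1-z)$, so
\begin{align*}
1+\frac{zh''(z)}{h'(z)}=\frac{1+(1-2c)z}{1-z}.
\end{align*}
This is the Möbius map of $\mathbb{D}$ onto the half plane ${\rm Re\,}w>c$. Indeed, at $z=-1$ it equals $c$, and it sends the unit circle onto the line ${\rm Re\,}w=c$. Hence $h\in\mathcal{A}_c$ and $h(0)=h'(0)-1=0$. Take $g(z)=\frac{1}{2}\int_0^z \zeta h'(\zeta)\,d\zeta$, so that $\omega=g'/h'=z/2$ and $|\omega|<1$. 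Then $f$ is sense preserving and normalized as in \eqref{P-151}, so $f\in\mathcal{H}_c$.

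\textbf{Unboundedness.} On the radius $z=r\in[0,1)$,
\begin{align*}
(1-r^2)\bigl(|h'(r)|+|g'(r)|\bigr)=(1-r^2)(1-r)^{2c-2}\left(1+\frac{r}{2}\right)\ge(1+r)(1-r)^{2c-1}.
\end{align*}
Since $c\le0$, the exponent satisfies $2c-1\le-1<0$, so the right-hand side tends to $\infty$ as $r\to1^-$. Therefore $\beta_f=\infty$, and $f$ is not harmonic Bloch.

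There is no real obstacle. The one point to check is the sign of $2c-1$: it is negative for every $c\in[-1/2,0]$, so the single example works uniformly in $c$. For example, $c=0$ gives $h(z)=z/(1-z)$ and $(1-r^2)|h'(r)|=(1+r)/(1-r)\to\infty$.
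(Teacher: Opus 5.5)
Your proof is correct and is essentially the paper's argument: the same extremal analytic part with $h'(z)=(1-z)^{2c-2}$, tested along the positive radius, where $(1-r^2)|h'(r)|=(1+r)(1-r)^{2c-1}\to\infty$ because $2c-1<0$. The paper takes $\omega(z)=z$ instead of $z/2$, which makes no difference, and your version is in fact tidier: you normalize $h$ correctly and bound $\beta_f$ from below, whereas the paper's display omits the $-1$ in $h$ and writes $\le$ when restricting the supremum to $(0,1)$.
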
	
\begin{proof}
	We proof the result by providing a harmonic mapping $f\in\mathcal{H}_c$ which is not harmonic Bloch. We consider the harmonic mapping $f=h+\overline{g}\in\mathcal{H}_c$ such that $$h(z)=\frac{(1-z)^{2c-1}}{1-2c}\quad\text{and}\quad \omega(z)=z$$
	Now 
	\begin{align*}
		\beta_f=&\sup_{z\in\mathbb{D}}(1-|z|^2)(|h'(z)|+|g'(z)|)\\
		=&\sup_{z\in\mathbb{D}}\left(\frac{(1-|z|^2)(1+|z|}{(1-z)^{2(1-c)}})\right)\\
		\le &\sup_{z\in(0,1)}\left(\frac{(1-z)(1+z)^2}{(1-z)^{2(1-c)}})\right)\\
		=&\sup_{z\in(0,1)}\left(\frac{(1+z)^2}{(1-z)^{2(1-c)-1}})\right)
	\end{align*}
	which is unbounded for $z\in(0,1).$ Thus $f$ is not harmonic Bloch.
\end{proof}

	Now onward, we discuss some growth and distortion theorems for harmonic mappings in the class $\mathcal{H}_c.$
	
	\begin{thm}\label{p870}
		Let $f=h+\overline{g}\in\mathcal{H}$ be a harmonic mapping of the form \eqref{P-151}. Then
		\begin{align*}
		\frac{1}{(1+|z|)^{2(1-c)}}\le |h'(z)|\le \frac{1}{(1-|z|)^{2(1-c)}}\quad\text{and}\quad 0\le |g'(z)|\le\frac{1}{1-|z|^{2(1-c)}}.
		\end{align*}
	\end{thm}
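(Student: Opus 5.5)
The plan is to work with $h\in\mathcal{A}_c$, as in Theorem \ref{p807}; the hypothesis ``$f\in\mathcal{H}$'' is read as $f\in\mathcal{H}_c$, since otherwise $c$ does not appear. Put $a=2(1-c)\in[2,3]$. As in the proof of Theorem \ref{p807}, we have
$$\frac{zh''(z)}{h'(z)}\prec \frac{a z}{1-z}.$$
The first step integrates this relation. Since $\psi(z)=-a\log(1-z)$ is convex univalent, the Hallenbeck--Ruscheweyh (Suffridge) lemma gives $\log h'(z)\prec -a\log(1-z)$. Hence $\log h'(z)=-a\log(1-\phi(z))$ for a Schwarz function $\phi$. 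Taking real parts and using $|\phi(z)|\le |z|=r$, we get $1-r\le|1-\phi(z)|\le 1+r$, and therefore
$$\frac{1}{(1+r)^{a}}\le |h'(z)|\le \frac{1}{(1-r)^{a}}.$$
Both bounds are attained by $h'(z)=(1-z)^{-a}$, the derivative of the extremal function from Theorem \ref{p807}, at $z=\mp r$.

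For the co-analytic part, write $g'=\omega h'$ with $|\omega|<1$, because $f$ is sense preserving. The lower bound $0\le|g'|$ is trivial. The natural upper bound is $|g'(z)|<|h'(z)|\le (1-r)^{-a}$; when $\omega(0)=0$ it improves to $r(1-r)^{-a}$ by Schwarz's lemma.

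The main obstacle is the stated bound $|g'(z)|\le 1/(1-r^{a})$, and my plan is to check whether it can hold at all before trying to derive it. Compare $(1-r)^a$ with $1-r^a$ as $r\to1^-$. The first behaves like $(1-r)^a$, while the second behaves like $a(1-r)$. Since $a\ge2$, we get $(1-r)^{a}<1-r^{a}$ near $r=1$, so the stated bound is strictly stronger than what the estimate above gives.

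I would then test it on the extremal pair $h'(z)=(1-z)^{-a}$ with constant dilatation $\omega\equiv\lambda$, where $0<\lambda<1$. This $f$ lies in $\mathcal{H}_c$, because $\omega$ is a constant of modulus less than $1$, so $f$ is sense preserving. For this pair,
$$|g'(r)|\,(1-r^{a})=\frac{\lambda(1-r^{a})}{(1-r)^{a}}\to\infty \quad (r\to1^-).$$
The same holds with $\omega(z)=z$, which is the choice $b_1=0$ used in the preceding theorem.

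So the proposal proves the two $h'$ bounds exactly as stated, together with $0\le|g'|$. For the last inequality it exhibits these counterexamples, showing that the printed bound cannot hold in $\mathcal{H}_c$. The provable replacement is $|g'(z)|\le |z|^{\,\epsilon}/(1-|z|)^{2(1-c)}$, with $\epsilon=1$ if $\omega(0)=0$ and $\epsilon=0$ in general.
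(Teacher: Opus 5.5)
Your argument is correct and follows the paper's route. The paper also applies Suffridge's subordination lemma to $zh''/h'\prec 2(1-c)z/(1-z)$ to get $h'\prec(1-z)^{-2(1-c)}$, and then bounds $g'=\omega h'$ using the Schwarz--Pick estimate $|\omega(z)|\le (|z|+\alpha)/(1+\alpha|z|)$, $\alpha=|\omega(0)|$, which contains your $\epsilon=0$ and $\epsilon=1$ cases. Your counterexample is valid: the paper's proof only establishes $|g'(z)|\le (1-|z|)^{-2(1-c)}$ (approached as $\alpha\to1^-$), and the printed $1/(1-|z|^{2(1-c)})$, in both the statement and the last line of its proof, is a misplaced parenthesis, so your corrected bound is exactly what the paper proves.
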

\begin{proof}
From the hypothesis, it follows that
$${\rm Re\,}\left(1+\frac{zh''(z)}{h'(z)}\right)> c$$ which implies that 
$$1+\frac{zh''(z)}{h'(z)}\prec\frac{1+(1-2c)z}{1-z},$$ that is, $$\frac{zh''(z)}{h'(z)}\prec z\frac{2(1-c)}{1-z}=\psi(z).$$
Next from a well known subordination result by Suffridge \cite{Suffridge-1970}, we get
\begin{align*}
h'(z)\prec &  e^{\int_{0}^{z}\frac{\psi(t)}{t}dT}\\\nonumber
  =& e^{\int_{0}^{z}\frac{2(1-c)}{1-t}dt}\\\nonumber
  = & \frac{1}{(1-z)^{2(1-c)}}.
\end{align*}
Thus
\begin{align}\label{p840}
	\frac{1}{(1+|z|)^{2(1-c)}}\le |h'(z)|\le \frac{1}{(1-|z|)^{2(1-c)}}.
	\end{align}
Now from the definition of dilatation $\omega=g'/h'$, it follows that 
\begin{align}\label{p850}
	\frac{\left||z|-\alpha\right|}{1-\alpha|z|}\le\left|\omega(z)\right|\le\frac{|z|+\alpha}{1+\alpha|z|},~~\alpha=|\omega(0)|.
\end{align}
Now applying \eqref{p840} and \eqref{p850} to the relation $g'=\omega h'$, we get
\begin{align*}
	\frac{\left||z|-\alpha\right|}{1-\alpha|z|}\frac{1}{(1+|z|)^{2(1-c)}}\le |g'(z)|\le \frac{|z|+\alpha}{1+\alpha|z|}\frac{1}{(1-|z|)^{2(1-c)}}.
\end{align*}
The left hand quantity $$\frac{\left||z|-\alpha\right|}{1-\alpha|z|}\frac{1}{(1+|z|)^{2(1-c)}}$$ has minimum value $0$ at $|z|=\alpha$ and the right hand quantity $$\frac{|z|+\alpha}{1+\alpha|z|}\frac{1}{(1-|z|)^{2(1-c)}}$$ has maximum value $\frac{1}{(1-|z|)^{2(1-c)}}$ as $\alpha\rightarrow 1^-.$ So we find that
$$0\le |g'(z)|\le\frac{1}{1-|z|^{2(1-c)}}.$$

To show that the estimates are sharp, we consider the harmonic mapping $f=h+\overline{g}$ with $h(z)=\{(1-z)^{2c-1}-1\}/(1-2c)$ and dilatation $\omega(z)=\frac{z+\alpha}{1+\alpha z},~\alpha\in(0,1).$ 
For $z\in(0,1)$ a few calculation show that
$h'(z)=\frac{1}{(1-|z|)^{2(1-c)}}$ and $h'(-z)=\frac{1}{(1+|z|)^{2(1-c)}}.$ Moreover, we get
$$g'(z)=\frac{z+\alpha}{1+\alpha z}\frac{1}{(1-z)^{2(1-c)}}$$, from which, for $z\in(0,1)$, it follows that 
$$g'(z)=\frac{z+\alpha}{1+\alpha z}\frac{1}{(1-z)^{2(1-c)}}\quad\text{and}\quad g'(-z)=\frac{-z+\alpha}{1-\alpha z}\frac{1}{(1-z)^{2(1-c)}}.$$
Now we  find that $g'(-z)=0$ at $z=\alpha$ and $g'(z)=\frac{1}{(1-z)^{2(1-c)}}$ as $\alpha\rightarrow 1^-$. This completes the proof.
\end{proof}

Before presenting our next result, we recall Gauss Hypergeometric function
\begin{align}\label{p900}
	{}_2F_1(a,b;c;z)=\sum\limits_{n=0}^\infty\frac{(a)_n(b)_n}{(c)_n}\frac{z^n}{n!},~~|z|<1,
\end{align}
	where $(q)_n$ is the Pochhammer symbol defined by
	$$(q)_0=1,~~~(q)_n=q(q+1)\dots (q+(n-1)),~~n\ge 1.$$

	\begin{thm}\label{log-000200}
		Let $f\in\mathcal{H}_c$ be a sense preserving harmonic mapping with dilatation $\omega=g'/h'.$ Then for $|z|=r$ the sharp inequalities
		\begin{align*}
			|g(z)|\le
			\begin{cases}
				\frac{1-(1-r)^{2c-1}}{\alpha(2c-1)}+(\alpha-\frac{1}{\alpha})\frac{{}_2F_1(1,2c-1;2c;\gamma)-(1-r)^{2c-1}{}_2F_1(1,2c-1;2c;\gamma(1-r))}{(1+\alpha)(2c-1)} ,~~\text{when}~~|\omega(0)|=\alpha\neq 0 \\
				 \frac{1-(1-r)^{2c-1}}{2c-1}-\frac{1-(1-r)^{2c-1}}{2c},\quad\quad\quad\quad\quad\quad\quad\quad\quad\quad\quad\quad\quad\quad\quad\quad\text{when}~~\omega(0)=0
			\end{cases}
		\end{align*}
		hold.
	\end{thm}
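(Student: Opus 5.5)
The plan is to bound $|g(z)|$ by integrating $|g'|$ along the radial segment from $0$ to $z$. Since $g(0)=0$,
\begin{align*}
|g(z)|=\left|\int_0^z g'(\zeta)\,d\zeta\right|\le \int_0^r |g'(te^{i\theta})|\,dt ,\qquad z=re^{i\theta}.
\end{align*}
Write $g'=\omega h'$ and use the two pointwise bounds already obtained in the proof of Theorem \ref{p870}. The first is $|h'(\zeta)|\le (1-|\zeta|)^{-2(1-c)}$, which comes from the subordination $h'(z)\prec (1-z)^{-2(1-c)}$. The second is the Schwarz--Pick type bound \eqref{p850}, $|\omega(\zeta)|\le (|\zeta|+\alpha)/(1+\alpha|\zeta|)$ with $\alpha=|\omega(0)|$. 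Together they give
\begin{align*}
|g(z)|\le \int_0^r \frac{t+\alpha}{1+\alpha t}\,\frac{dt}{(1-t)^{2(1-c)}} .
\end{align*}

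The rest is evaluating this real integral and matching it to the closed form in the statement. Split the rational factor as
\begin{align*}
\frac{t+\alpha}{1+\alpha t}=\frac1\alpha+\Big(\alpha-\frac1\alpha\Big)\frac{1}{1+\alpha t}.
\end{align*}
The first piece integrates in elementary form to $\frac{1-(1-r)^{2c-1}}{\alpha(2c-1)}$, up to the sign convention the paper uses for $2c-1<0$. For the second piece, substitute $s=1-t$. Then $1+\alpha t=(1+\alpha)(1-\gamma s)$ with $\gamma=\alpha/(1+\alpha)$, and the integral becomes
\begin{align*}
\frac{1}{1+\alpha}\int_{1-r}^{1} s^{2c-2}(1-\gamma s)^{-1}\,ds .
\end{align*}
Expand $(1-\gamma s)^{-1}=\sum\gamma^n s^n$ and integrate term by term. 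This gives $\sum_n \gamma^n s^{n+2c-1}/(n+2c-1)$, and the identity $\frac{1}{n+2c-1}=\frac{1}{2c-1}\frac{(2c-1)_n}{(2c)_n}$ turns it into $\frac{s^{2c-1}}{2c-1}\,{}_2F_1(1,2c-1;2c;\gamma s)$. Evaluating between $s=1-r$ and $s=1$ gives the hypergeometric difference in the statement, with the factor $(\alpha-1/\alpha)/((1+\alpha)(2c-1))$.

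The case $\omega(0)=0$ cannot be obtained by letting $\alpha\to0$ in this formula, because of the $1/\alpha$ terms. There I would use $|\omega(t)|\le t$ directly and integrate $t(1-t)^{-2(1-c)}$ by writing $t=1-(1-t)$. This produces two power terms matching the second line of the statement, again up to sign conventions, which I would recheck carefully against the stated form.

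For sharpness, take $h(z)=\{(1-z)^{2c-1}-1\}/(1-2c)$ and $\omega(z)=(z+\alpha)/(1+\alpha z)$, or $\omega(z)=z$ when $\omega(0)=0$, and set $z=r\in(0,1)$. Then $g'(t)>0$ along the segment and both pointwise bounds hold with equality, so every inequality above becomes an equality.

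I expect the main obstacle to be the hypergeometric bookkeeping. The parameter $2c-1\in[-2,-1]$ is a negative number, so $(2c)_n$ must be checked to stay nonzero; this fails at $c=0$ and $c=-1/2$. In those endpoint cases the series needs a separate logarithmic treatment, and the signs $1-(1-r)^{2c-1}$ versus $(1-r)^{2c-1}-1$ need care.
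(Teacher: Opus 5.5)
Your route is the same as the paper's. You integrate $|g'|=|\omega||h'|$ along the radius, bound $|h'(\zeta)|\le(1-|\zeta|)^{-2(1-c)}$ and $|\omega(\zeta)|\le(|\zeta|+\alpha)/(1+\alpha|\zeta|)$, evaluate the real integral through ${}_2F_1$, and take $h(z)=\{(1-z)^{2c-1}-1\}/(1-2c)$ with $\omega(z)=(z+\alpha)/(1+\alpha z)$ for sharpness. For $\alpha\neq0$ your computation is correct and reproduces the printed formula exactly. There is no sign-convention issue in the first piece: $\int_0^r(1-t)^{2c-2}\,dt=\frac{1-(1-r)^{2c-1}}{2c-1}$ exactly, with numerator and denominator both negative. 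Your remark that $(2c)_n$ vanishes at $c=0$ and $c=-1/2$ is a genuine point the paper ignores. At those endpoints the hypergeometric expression is undefined and the bound must be read as a limit involving logarithms.

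The step that fails is your claim that the $\omega(0)=0$ computation matches the second line of the statement ``up to sign conventions''. Writing $t=1-(1-t)$ gives
\begin{align*}
\int_0^r \frac{t\,dt}{(1-t)^{2(1-c)}}=\frac{1-(1-r)^{2c-1}}{2c-1}-\frac{1-(1-r)^{2c}}{2c},
\end{align*}
with exponent $2c$, not $2c-1$, in the second numerator. The printed expression simplifies to $\frac{1-(1-r)^{2c-1}}{2c(2c-1)}$. This is strictly negative for $0<r<1$ and $-1/2\le c<0$, and undefined at $c=0$, so it cannot bound $|g(z)|$. Indeed, for the extremal map with $\omega(z)=z$ one has $g(r)>0$.

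The discrepancy is a wrong exponent in the statement, not a sign convention; the paper's proof makes the same slip when it evaluates this integral. What your argument actually proves, and shows to be sharp, is the corrected bound above. When $c=0$, the term $\frac{1-(1-r)^{2c}}{2c}$ is replaced by its limit $-\log(1-r)$. You should state this corrected bound explicitly instead of asserting agreement with the printed second case.
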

	\begin{proof}
	For $f=h+\overline{g}\in\mathcal{H}_c$ and dilatation $\omega=g'/h' :\mathbb{D}\rightarrow\mathbb{D},$ we have 
	$$|h'(z)|=\frac{1}{(1-r)^{2(1-c)}}\quad\text{and}\quad |\omega(z)|\le\frac{r+\alpha}{1+\alpha r},~~\alpha=|\omega(0)|.$$
	Thus for $\alpha\neq 0,$ we get
	\begin{align*}
	|g'(z)|\le  & \int_{0}^{r}|\omega(\zeta)|h'(z)|d|\zeta|\\
	\le & \int_{0}^{r}\frac{\alpha+|\zeta|}{1+\alpha|\zeta|}\frac{1}{(1-|\zeta|)^{2(1-c)}}d|\zeta|\\
	=&\frac{1-(1-r)^{2c-1}}{\alpha(2c-1)}+(\alpha-\frac{1}{\alpha})\frac{{}_2F_1(1,2c-1;2c;\gamma)-(1-r)^{2c-1}{}_2F_1(1,2c-1;2c;\gamma(1-r))}{(1+\alpha)(2c-1)},
	\end{align*}
	where $\gamma=\frac{\alpha}{1+\alpha}$ and the quantity ${}_2F_1(a,b;c;z)$ is defined by \eqref{p900}.\\
	
	 For $\alpha=0$, it is well known that $|\omega(z)|\le|z|$ and so
	\begin{align*}
	|g'(z)|\le  & \int_{0}^{r}|\omega(\zeta)|h'(z)|d|\zeta|\\
	\le & \int_{0}^{r}|\zeta|\frac{1}{(1-|\zeta|)^{2(1-c)}}d|\zeta|\\
	=&\frac{1-(1-r)^{2c-1}}{2c-1}-\frac{1-(1-r)^{2c-1}}{2c}.
	\end{align*}
	Sharpness hold for the harmonic mapping $f=h+\overline{g}\in\mathcal{H}_c$ such that
	$$h(z)=\frac{(1-z)^{2c-1}-1}{1-2c}$$ and dilatation $\omega(z)=\frac{z+\alpha}{1+\alpha z},~~\alpha\in[0,1)$.
	
	\end{proof}

	%%%%%%%%%%%%%%%%%%%%%%%%%%%%%%%%%%%%%%%%%%%%%%%%%%%%%%%%%%%%%%%%%%%%%%%%%%%%%%%%%%%%%%%%%%%%%%%%%%%%%%%%%%%%%%%%%%%%%%%%%%%%%%%%%%%%%%%%%%%%%%

	\section*{Declarations:}

	\noindent\textbf{Data availability:}
	Data sharing not applicable to this article as no data sets were generated or analyzed during the current study.\\

	\noindent\textbf{Conflict of interest:} The author declares that he has no conflict of interest.

	%Yamashita, Shinji. "The Pick version of the Schwarz lemma and comparison of the Poincaré densities." Annales Fennici Mathematici 19, no. 2 (1994): 291-322.
	
\end{document}